\title{Radius estimates for nearly stable $H$-hypersurfaces of dimension 2, 3, and 4.}
\author{G.\ Tinaglia and A.\ Zhou}
\date{}
\pgfplotsset{width=8cm, compat=1.9}
\newtheorem{theorem}{Theorem}[section]
\newtheorem{corollary}[theorem]{Corollary}
\newtheorem{remark}[theorem]{Remark}
\theoremstyle{definition}
\newtheorem{definition}{Definition}[section]
\newcommand{\R}{{\mathbb{R}}}
\DeclareMathOperator{\dist}{dist}
\DeclareMathOperator{\inj}{inj}
\begin{document}

\maketitle

\begin{abstract}
In this paper we study the geometry of complete constant mean curvature (CMC) hypersurfaces immersed in an $(n+1)$-dimensional Riemannian manifold $N$ ($n=2, 3$ and $4$) with sectional curvatures uniformly bounded from below. We generalise radius estimates given by Rosenberg~\cite{ros} ($n=2$) and by Elbert, Nelli and Rosenberg~\cite{nelliestimate}  and Cheng~\cite{cheng} ($n=3,4$) to nearly stable CMC hypersurfaces immersed in $N$. We also prove that certain CMC hypersurfaces effectively embedded in $N$ must be proper.
\end{abstract}

\section{Introduction}
Throughout this paper, we refer to a hypersurface $M$ immersed in a manifold $N$ with constant mean curvature $H$ as an $H$-hypersurface.
Let $N$ be an $(n+1)$-dimensional Riemannian manifold $N$ ($n=2, 3$ and $4$) with sectional curvatures uniformly bounded from below. In their seminal papers, Rosenberg~\cite{ros} ($n=2$, see also~\cite{nelros,rosros}) and Elbert, Nelli and Rosenberg~\cite{nelliestimate}  and Cheng~\cite{cheng} ($n=3,4$) prove radius estimates for stable $H$-hypersurfaces immersed in $N$. In this paper we generalise these estimates to nearly stable $H$-hypersurfaces.

\begin{theorem}\label{radiusest}
    Let \(N\) be an $(n+1)$-dimensional Riemannian manifold ($n=2, 3$ and $4$) with sectional curvatures uniformly bounded from below and let \(M\) be a complete, \(\delta_n\)-stable, $H$-hypersurface with  \(\delta_n <\frac{27}{32}, \frac{7}{12}, \frac{19}{64}\) respectively. Then, if \(|H| > 2\sqrt{|\min(0,\mathcal K)|}\) (where $\mathcal K:=\mathcal{K}(N)$ denotes  the infimum of the sectional
curvatures of $N$), there exists a constant \(c := c(n,\delta_n, H,\mathcal K) > 0\) such that for any \(p \in M\),
    \[ \dist_M(p,\partial M) \leq c. \]
\end{theorem}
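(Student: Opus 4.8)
The plan is to argue by contradiction: if the conclusion fails, then for every $R>0$ there is a point $q\in M$ with $\dist_M(q,\partial M)>R$, so the intrinsic geodesic ball $B_M(q,R)$ lies in the interior of $M$ and every $\phi\in C^\infty_c(B_M(q,R))$ is admissible in the $\delta_n$-stability inequality
\[
\int_M|\nabla\phi|^2\,dM\;\ge\;\int_M\bigl(|A|^2+\Ric_N(\nu)\bigr)\phi^2\,dM\;-\;\delta_n\!\int_M H^2\phi^2\,dM .
\]
It then suffices to bound $R$ above by a constant depending only on $n,\delta_n,H$ and $\mathcal K$.

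I would first extract a pointwise lower bound for the potential $Q:=|A|^2+\Ric_N(\nu)-\delta_nH^2$ of the stability operator. Decomposing $A$ into its trace and traceless parts gives $|A|^2=|\mathring A|^2+\tfrac1n(\tr A)^2$, which bounds $|A|^2$ below by a fixed positive multiple of $H^2$, while $\Ric_N(\nu)\ge n\mathcal K$ because $\Ric_N(\nu)$ is a sum of $n$ sectional curvatures of $N$. The hypothesis $|H|>2\sqrt{|\min(0,\mathcal K)|}$ furnishes exactly the margin needed so that, once $\delta_n$ lies below the stated threshold (and, where the naive estimate is too weak, after invoking an Okumura/Simons-type pinching inequality for $|\mathring A|$), one obtains $Q\ge\beta^2$ for some positive $\beta=\beta(n,\delta_n,H,\mathcal K)$. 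Feeding this into the stability inequality gives $\beta^2\int\phi^2\le\int|\nabla\phi|^2$ for all $\phi\in C^\infty_c(B_M(q,R))$, i.e.\ $\lambda_1^{\mathrm{Dir}}(B_M(q,R))\ge\beta^2$.

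The crux is to control the intrinsic volume $V(r):=\Vol(B_M(q,r))$. For $n=2$ I would follow Rosenberg: the first variation of area identifies $V''(r)$ with $\tfrac{d}{dr}\Length(\partial B_M(q,r))$, Gauss--Bonnet on the geodesic disk bounds this by $2\pi-\int_{B_M(q,r)}K_M$, and the Gauss equation rewrites $K_M$ through $|A|^2$ and the ambient sectional curvature; inserting the stability inequality with a suitable cutoff then yields a differential inequality of the shape $V''\le C_1-C_2V$ with $C_2>0$, forcing at most quadratic growth of $V$ and, ultimately, a bound on $R$. For $n=3,4$ I would run the Schoen--Simon--Yau / Elbert--Nelli--Rosenberg / Cheng scheme in the $\delta_n$-stable setting: Simons' identity for $\mathring A$, a Kato-type inequality $|\nabla A|^2\ge(1+\varepsilon_n)\bigl|\nabla|A|\bigr|^2$ with $\varepsilon_n>0$, and --- crucially, since $|H|$ is bounded independently of the ambient geometry --- the Michael--Simon Sobolev inequality on $M$, whose constant depends only on $n$ and $|H|$; a Moser iteration then produces a polynomial bound for $V$ (equivalently, a curvature estimate). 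The interplay of $\varepsilon_n$ with the Sobolev exponent $\tfrac{n}{n-2}$, combined with the size of the margin in the pointwise step, is what forces $n\le4$ and pins down $\tfrac{27}{32},\tfrac7{12},\tfrac{19}{64}$; concretely these emerge by optimising the exponent $t$ in a test function of the form $f\,|A|^{1+t}$.

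Finally I would combine the two ingredients. With $V$ growing at most polynomially, a radial cutoff $\phi=f(\dist_M(q,\cdot))$ (linear, or logarithmic, on $B_M(q,R)$) has Rayleigh quotient $\int|\nabla\phi|^2/\int\phi^2$ equal, via the coarea formula, to a one-dimensional quotient against the weight $V'$; polynomial growth makes this quotient strictly below $\beta^2$ once $R>c(n,\delta_n,H,\mathcal K)$, contradicting $\lambda_1^{\mathrm{Dir}}(B_M(q,R))\ge\beta^2$. Hence $\dist_M(p,\partial M)\le c$ for every $p$. I expect the main obstacle to be the volume-growth step when $N$ is only assumed to have sectional curvature bounded below: the Simons/Moser estimate (or the Gauss--Bonnet differential inequality) must be carried out with no upper curvature bound and no bound on $\nabla\Rm_N$, so the terms involving $\nabla\Rm_N$ in Simons' identity have to be integrated by parts onto $\nabla A$ and the cutoff, and the \emph{ambient-independent} Michael--Simon Sobolev inequality has to play the role usually taken by volume comparison.
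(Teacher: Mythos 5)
Your overall scheme---positivity of the stability potential, hence a spectral gap $\lambda_1^{\mathrm{Dir}}(B_M(q,R))\ge\beta^2$, contradicted by a Rayleigh quotient computed from a volume-growth bound---is genuinely different from what the paper does, and it has a gap at exactly the point you flag as the main obstacle. The volume-growth step is not established by the tools you cite: Simons' identity, the Kato inequality, the Michael--Simon Sobolev inequality and Moser iteration are the ingredients of \emph{curvature} estimates in the Schoen--Simon--Yau scheme, and neither Elbert--Nelli--Rosenberg nor Cheng proves (or uses) an intrinsic volume bound --- their radius estimate, like the one in this paper, comes from a completely different mechanism. Worse, the bound you need is in tension with your first ingredient: a uniform positive lower bound for $\lambda_1^{\mathrm{Dir}}$ on all intrinsic balls forces \emph{exponential} volume growth (Brooks/Cheng), so the subexponential (polynomial) growth you want to feed into the radial cutoff is precisely the statement whose proof is missing, and there is no off-the-shelf route to it for $\delta$-stable $H$-hypersurfaces in an ambient manifold with only a lower sectional curvature bound. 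A secondary issue: the stability inequality you start from, with potential $|A|^2+\Ric_N(\nu)-\delta_n H^2$, is not the paper's notion of $\delta$-stability, which requires $\int|\nabla f|^2\ge(1-\delta)\int(|A|^2+\overline{\Ric}(\nu))f^2$; with your version the thresholds $\tfrac{27}{32},\tfrac{7}{12},\tfrac{19}{64}$ would not appear (your pointwise positivity step already works for all $\delta_n<\tfrac{3n}{4}$), so the optimisation ``in the exponent $t$ of $f|A|^{1+t}$'' that you hope will produce them is not where they come from.

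What the paper actually does avoids volume growth entirely. From $\delta$-stability one takes a positive solution $u$ of $L^{\delta}u=\Delta u+(1-\delta)(|A|^2+\overline{\Ric}(\nu))u=0$, conformally changes the metric to $u^{2k}\,ds^2$, and applies the second variation of \emph{arc length} to a minimizing geodesic from $p$ to $\partial B_M(p,r)$ in the new metric. The conformal transformation law for $\widetilde R_{11}$ lets one substitute $\Delta u$ from the equation, the Gauss equation converts $R_{11}$ into ambient curvature plus terms in $\Phi=A-HI$, and after absorbing the cross terms one is left with a one-dimensional inequality $A\int\varphi_s^2\ge B\int\varphi^2$ along the geodesic; testing with $\sin(\pi s/a)$ bounds its length. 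The constraints that make all the absorptions work are $\tfrac{5(n-1)}{4n(1-\delta)}<k<\tfrac{4}{n-1}$, and this window is nonempty exactly for $n=2,3,4$ with $\delta_n$ below the stated thresholds --- that is where both the dimension restriction and the numbers $\tfrac{27}{32},\tfrac{7}{12},\tfrac{19}{64}$ come from. If you want to salvage your approach, the $n=2$ case can be pushed through with Gauss--Bonnet in the spirit of Rosenberg/Castillon, but for $n=3,4$ you would need to supply an intrinsic volume (or length, as in the paper) estimate that your outline currently only asserts.
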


See Section~\ref{generalize} for a result involving the scalar curvature of $N$ when $n=2$ that generalizes the main theorem in~\cite{ros}. 

Near stability was a notion widely employed in Colding-Minicozzi Theory, that is  \cite{colding2002space1,colding2002space2,colding2002space3,colding2002space4, colding2012space}, to study the geometry of embedded minimal ($H=0$) disks. Many results about minimal hypersurfaces have employed near stability directly or extended the concept of stability to near stability (see for instance~\cite{cmpr, chezho,cl21,fu09,holiwa,mpr06,tz09}). 

In Section~\ref{proof} we use the radius estimates mentioned above for stable $H$-surfaces together with the Stable Limit Leaf Theorem by Meeks, Perez and Ros~\cite{mpr18} to prove the following theorem. 

\begin{theorem}\label{main}
    With $N$ as in Theorem~\ref{radiusest}, let \(M\) be a complete $H$-hypersurface effectively embedded in \(N\). Suppose that the norm of the second fundamental form of \(M\) is locally bounded (bounded in compact extrinsic balls) and  \(|H| > 2\sqrt{|\min(0,\mathcal{K})|}\). Then \(M\) is proper. 
\end{theorem}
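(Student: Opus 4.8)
The plan is to argue by contradiction. Suppose $M$ is not proper in $N$. Using the local boundedness of the second fundamental form together with the completeness of $M$, I would extract from the closure $\overline{M}$ a complete, boundaryless, \emph{stable} $H$-hypersurface $L$ with $|H_L| = |H| > 2\sqrt{|\min(0,\mathcal K)|}$. The radius estimate for stable $H$-hypersurfaces (Rosenberg~\cite{ros} for $n=2$, Elbert--Nelli--Rosenberg~\cite{nelliestimate} and Cheng~\cite{cheng} for $n=3,4$; equivalently, the $\delta_n=0$ case of Theorem~\ref{radiusest}) then forces $\dist_L(p,\partial L)\le c$ for every $p\in L$. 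Since $L$ is complete with empty boundary, $\dist_L(p,\partial L)=+\infty$, a contradiction. Hence $M$ must be proper.

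To produce $L$, observe that non-properness gives a point $q\in N$ and a sequence $p_k\in M$ with $p_k\to q$ in $N$ while $p_k$ eventually leaves every compact subset of $M$. For fixed $R>0$, the hypothesis provides a bound on $|A_M|$ over the closed extrinsic ball $\overline{B}_N(q,R)$, so the components $M_k$ of $M\cap B_N(q,R)$ containing $p_k$ are embedded $H$-surfaces with uniformly bounded local geometry; after passing to a subsequence they converge in $C^\infty$ on compact subsets to an $H$-lamination $\mathcal L_R$ of $B_N(q,R)$, and $q$ lies on a leaf $L_R$ that is a limit leaf of $\mathcal L_R$ (it is approached by the $M_k$, which, because $p_k\to\infty$ in $M$, are genuinely distinct pieces of $M$). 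Running this construction with $R\to\infty$ and diagonalizing, the local laminations patch along $\overline{M}$, the limit set $\overline{M}\setminus M$ becomes a closed subset of $N$ carrying a lamination structure, and one obtains a distinguished leaf $L\subseteq\overline{M}\setminus M$ through $q$.

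It remains to check that $L$ is complete and boundaryless. Leaves of laminations have empty boundary, so only completeness is at issue. A leaf $L$ that is closed in $N$ is complete: a unit-speed geodesic $\gamma\colon[0,a)\to L$ of finite length would converge in $N$ to a point $x\in\overline{M}\setminus M$, and the local product structure of the lamination near $x$ would allow $\gamma$ to be extended past $a$ inside $L$ (along the way $\gamma$ cannot limit to a point of $M$, since a limit leaf does not meet $M$), contradicting maximality. Thus $L$ is a complete, boundaryless $H$-hypersurface, and, being a limit leaf of an $H$-lamination, it is two-sided and \emph{stable} by the Stable Limit Leaf Theorem of Meeks, Perez and Ros~\cite{mpr18}; in particular it is $\delta_n$-stable for every $\delta_n$ in the range of Theorem~\ref{radiusest}. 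Applying the stable radius estimate to $L$ produces the contradiction described above.

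I expect the main obstacle to be the second and third steps: setting up the limit lamination near a point of non-properness and, above all, arguing that it contains a genuinely \emph{complete} leaf rather than one defined only in a bounded extrinsic ball. This is exactly where the assumptions that $M$ is complete and has locally bounded second fundamental form are used, and where one relies on the now-standard theory of $C^\infty$-limits of embedded hypersurfaces with locally bounded geometry and on the closedness of the limit set; granting this, the rest is a short deduction from the classical stable radius estimates and the Stable Limit Leaf Theorem.
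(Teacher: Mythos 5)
Your overall strategy is the same as the paper's: pass to the closure $\overline{M}$, give it the structure of an $H$-lamination using the locally bounded second fundamental form, produce a complete boundaryless limit leaf, invoke the Stable Limit Leaf Theorem of Meeks--P\'erez--Ros, and contradict the stable radius estimate (the $\delta=0$ case of Theorem~\ref{radiusest}, i.e.\ Corollary~\ref{radestcor}). However, there is a genuine gap at the step where you assert ``one obtains a distinguished leaf $L\subseteq\overline{M}\setminus M$ through $q$.'' Nothing in your construction rules out the possibility that $\overline{M}=M$, i.e.\ that $q\in M$ and the limiting graph through $q$ is itself a sheet of $M$. This can happen for a non-proper hypersurface: the image can be a closed subset of $N$ while the inclusion still fails to be proper, because the intrinsic topology of $M$ differs from the subspace topology (the sheets through the $p_k$ are distinct \emph{local} graphs, but they may all belong to the single global leaf $M$). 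In that case the lamination $\mathcal{L}=\overline{M}$ has exactly one leaf, and that leaf is \emph{not} a limit leaf in the sense required by Theorem~\ref{stableleaf} (it is not contained in the closure of $\mathcal{L}-L=\emptyset$), so no stable leaf is produced and the argument stalls. Relatedly, your claim that the $M_k$ are ``genuinely distinct pieces of $M$'' is only true sheet-by-sheet in a small chart, not for the components of $M\cap B_N(q,R)$ at a fixed scale $R$.

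The paper closes exactly this gap with a separate argument: assuming $\overline{M}=M$, every point of $M$ is an (extrinsic but not intrinsic) limit point because the set of such points is open and closed; one then iterates the construction of small pairwise disjoint geodesic balls around the accumulating sheets to produce uncountably many pairwise disjoint open sets in $M$, contradicting second countability of a manifold. Hence $\overline{M}\setminus M\neq\emptyset$, and any leaf of $\overline{M}\setminus M$ is a bona fide limit leaf to which the Stable Limit Leaf Theorem applies. You would need to add an argument of this type (or otherwise show directly that a non-proper leaf accumulates on a leaf other than itself). A minor further point: your parenthetical remark that a stable leaf is ``in particular $\delta_n$-stable for every $\delta_n$ in the range'' does not follow from Definition~\ref{def:almost-stability} in general (the potential $|A|^2+\overline{Ric}(\nu)$ need not be nonnegative), but this is harmless since only the $\delta=0$ radius estimate is needed at that stage.
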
 

See Remark~\ref{remark} for a stronger statement when $n=2$    and~\cite{cmt1, cmt2, rt} for examples of complete $H$-surfaces embedded in $\mathbb H^3$ and $\mathbb H^2\times \mathbb R$ that are not proper.

Theorem~\ref{main} is motivated by several results in the literature. In their seminal paper~\cite{coldingcalabi}, Colding and Minicozzi proved that a complete, minimal surface embedded in \(\R^3\) with finite topology must be proper, see also~\cite{colding2002space1,colding2002space2,colding2002space3,colding2002space4,colding2012space}. Meeks and Rosenberg generalised this to complete minimal surfaces embedded in \(\R^3\) with positive injectivity radius~\cite{meekslamination}, see also~\cite{heli}. Finally, Meeks and Tinaglia further generalised both these results to constant mean curvature (CMC) surfaces~\cite{meekscurvature}, see also~\cite{Meeks_2017,metisurfaces,meeks2015onesided,meeks2016limit,metijems}.

\section{Radius estimates for nearly stable \texorpdfstring{$H$}{H}-hypersurfaces}\label{generalize}

We begin this section by reminding the reader of the notion of $\delta$-stability.

\begin{definition}
\label{def:almost-stability}
For $\delta \in [0, 1]$, we say that a $H$-hypersurface $M$ immersed in $N$ is $\delta$-stable if
\[
\int_M \left( |\nabla f|^2 - (1 - \delta)(|A|^2 + \overline{Ric}(\nu)) f^2 \right) \geq 0,
\]
for $f \in C_0^\infty(M)$.

When $\delta=0$, then $M$ is stable.
\end{definition}

In what follows, we generalize the radius estimates given in Theorem 1 in~\cite{ros} and Theorem 1 in \cite{nelliestimate} for stable $H$-hypersurfaces to $\delta$-stable $H$-hypersurfaces. We begin by generalizing Theorem 1 in \cite{nelliestimate}, that is the Theorem~\ref{radiusest} when $\delta=0$.

\begin{proof}[Proof of Theorem~\ref{radiusest}.]
    Our proof draws from the methods established in~\cite{nelliestimate}.
    
    Since \(M\) is \(\delta\)-stable, we can find a smooth function \(u > 0\) on \(M\) such that the $\delta$-stability operator satisfies
    \[ L^{\delta} u = \Delta u + (1-\delta)(|A|^2 + \overline{Ric}(\nu))u = 0, \]
    see for instance Lemma 2.1 in~\cite{wmjpar1}. 
    By decomposing the symmetric shape operator into the mean curvature and the trace-less part, \(A = HI + \Phi\), the square norm of \(\Phi\) is
    \[ |\Phi|^2  = |A|^2 - n|H|^2, \]
    so we can write the near-stability operator as 
    \[ L^{\delta} = \Delta + (1-\delta)(|\Phi^2| + nH^2 + \overline{Ric}(\nu)). \]
    We use \(ds^2\) to denote the induced metric on \(M\) by \(N\) and conformally change the metric to \(d\widetilde{s}^2 = u^{2k}ds^2\), where we will choose \(k\) later. Fix \(p \in M\) and take \(r > 0\) small enough such that the geodesic ball \(B_M(p,r)\) centred at \(p\) and of \(ds\)-radius \(r\) is contained in the interior of \(M\). Let \(\gamma\) be a \(d\widetilde{s}\)-minimising geodesic which joins \(p\) to \(\partial B_M(p,r)\). Let \(a\) be the \(ds\)-length of \(\gamma\) and \(\widetilde{a}\) be the \(d\widetilde{s}\)-length of \(\gamma\). Then we have \(a \geq r\) and it suffices to prove that there exists a constant \(c = c(n,H,K,\delta) > 0\) such that \(a \leq c\).
    To this end, let \(R\) and \(\widetilde{R}\) be the curvature tensors of \(M\) in the metrics \(ds\) and \(d\widetilde{s}\) respectively. Choose an orthonormal basis \(\{\widetilde{e}_1 = \frac{\partial \gamma}{\partial \widetilde{s}}, \widetilde{e}_2,\dots.\widetilde{e}_n\}\) for \(d\widetilde{s}\) such that \(\widetilde{e}_2,\dots.\widetilde{e}_n\) are parallel along \(\gamma\) and let \(\widetilde{e}_{n+1} = \nu\). This yields an orthonormal basis \(\{e_1 = \frac{\partial \gamma}{\partial s} = u^k\widetilde{e}_1, e_2 = u^k\widetilde{e}_2,\dots, e_n = u^k\widetilde{e}_n \}\) for \(ds\). Let \(\overline{R}\) be the curvature tensor for the ambient manifold \(N\). Using this notation, $R_{11}$ (respectively \(\widetilde{R}_{11}\)) is the Ricci curvature in the direction of $e_1$ for the metric $ds$ (respectively $d\widetilde{s}$), and \(\overline{R}_{n+1,n+1}\) is $\overline{Ric}(\nu)$.

    Since \(\gamma\) is \(d\widetilde{s}\)-minimising, by the second variation formula for length, we have
    \[ \int_0^{\widetilde{a}} \left( (n-1)\left(\frac{d\phi}{d\widetilde{s}}\right) - \widetilde{R}_{11}\phi^2 \right) d\widetilde{s} \geq 0, \]
    for any smooth function \(\phi\) with \(\phi(0) = \phi(\widetilde{a}) = 0\). We use the formula for Ricci curvature under conformal change of metric, (see for instance the appendix in~\cite{nelliestimate} for a full calculation)
    \[ \widetilde{R}_{11} = u^{-2k}\left( R_{11} - k(n-2)(\log u)_{ss} - k\frac{\Delta u}{u} + k\frac{|\nabla u|^2}{u^2} \right). \]
    Then by $\delta$-stability, \(L^{\delta}u = \Delta u + (1-\delta)(|\Phi^2| + nH^2 + \overline{Ric}(\nu))u = 0\), so we can replace the Laplacian term yielding
    \begin{align*} 
        \widetilde{R}_{11} &= u^{-2k}(R_{11} - k(n-2)(\log u)_{ss} \\ 
        &+ k(1-\delta)(|\Phi|^2 + nH^2 + \overline{R}_{n+1,n+1}) + k\frac{|\nabla u|^2}{u^2}). 
    \end{align*}
    Next, the Gauss equation relates the ambient curvature to the intrinsic curvature
    \[ R_{ijij} = \overline{R}_{ijij} + h_{ii}h_{jj} - h_{ij}^2 = \overline{R}_{ijij} + (\Phi_{ii} + H)(\Phi_{jj} + H) - (\Phi_{ij} + H\delta_{ij})^2. \]
    Letting \(i=1\) and summing over \(j=2,\dots,n\) gives
    \begin{align*} 
        &R_{11}  \\
        &= \sum_{j=2}^n \overline{R}_{1j1j} + \sum_{j=2}^n \Phi_{11}\Phi_{jj} + (n-1)\Phi_{11}H + \sum_{j=2}^n \Phi_{jj}H + (n-1)H^2 - \sum_{j=2}^n \Phi_{1j}^2 \\
        &= \sum_{j=2}^n \overline{R}_{1j1j} + \sum_{j=2}^n \Phi_{11}\Phi_{jj} + (n-2)\Phi_{11}H + \sum_{j=1}^n \Phi_{jj}H + (n-1)H^2 - \sum_{j=2}^n \Phi_{1j}^2. 
    \end{align*}
    Using the traceless property \(\sum_{j=1}^n \Phi_{jj} = 0\) on the second and fourth terms gives
    \[ R_{11} = \sum_{j=2}^n \overline{R}_{1j1j} - \Phi_{11}^2 + (n-2)\Phi_{11}H + (n-1)H^2 - \sum_{j=2}^n \Phi_{1j}^2. \] 
    We substitute this expression into the formula for \(\widetilde{R}_{11}\) to obtain
    \begin{align*}
        &\widetilde{R}_{11} \\
        &= u^{-2k}\left(\sum_{j=2}^n \overline{R}_{1j1j} - \Phi_{11}^2 + (n-2)\Phi_{11}H + (n-1)H^2 - \sum_{j=2}^n \Phi_{1j}^2 \right. \\ 
        &- \left.k(n-2)(\log u)_{ss} + k(1-\delta)(|\Phi|^2 + nH^2 + \overline{R}_{n+1,n+1}) + k\frac{|\nabla u|^2}{u^2}\right). \\
        &= u^{-2k}\left(\sum_{j=2}^n \overline{R}_{1j1j} + k(1-\delta)\overline{R}_{n+1,n+1} + (kn(1-\delta) + n-1)H^2 + (n-2)\Phi_{11}H \right) \\
        &+ u^{-2k} \left( k(1-\delta)|\Phi|^2 - \Phi_{11}^2 - \sum_{j=2}^n \Phi_{1j}^2 - k(n-2)(\log u)_{ss} + k\frac{|\nabla u|^2}{u^2} \right).
    \end{align*}
    Now let \(\varphi = \phi \circ \widetilde{s}\) so that \(\varphi(0) = \varphi(a) = 0\). We combine the above expression with the first inequality and \(d\widetilde{s}^2 = u^{2k}ds^2\) to obtain
    \begin{align*}
        &(n-1)\int_0^a (\varphi_s)^2 u^{-k}\,ds \geq \int_0^a \varphi^2 u^{-k} \left( \sum_{j=2}^n \overline{R}_{1j1j} + k(1-\delta)\overline{R}_{n+1,n+1} \right)\,ds \\
        &+ \int_0^a \varphi^2 u^{-k} \\
        &\left( (kn(1-\delta) + n-1)H^2 + (n-2)\Phi_{11}H + k(1-\delta)|\Phi|^2 - \Phi_{11}^2 - \sum_{j=2}^n \Phi_{1j}^2 \right)\,ds \\
        &- \int_0^a \varphi^2 u^{-k} \left( k(n-2)(\log u)_{ss} + k\frac{|\nabla u|^2}{u^2} \right)\,ds.
    \end{align*}
    We replace \(\varphi\) by \(\varphi u^{k/2}\) to eliminate the \(u^{-k}\). Then differentiation gives \((\varphi u^{k/2})_s = \varphi_s u^{k/2} + \frac{k}{2}\varphi u^{(k-2)/2}u_s \) which yields
    \begin{align*}
        &(n-1)\int_0^a (\varphi_s)^2\,ds + k(n-1)\int_0^a \varphi \varphi_s u_s u^{-1} \,ds + \frac{k^2(n-1)}{4}\int_0^a \varphi^2 u_s^2 u^{-2}\,ds \\
        &\geq \int_0^a \varphi^2 \left( \sum_{j=2}^n \overline{R}_{1j1j} + k(1-\delta)\overline{R}_{n+1,n+1} \right)\,ds \\
        &+ \int_0^a \varphi^2 \\
        &\left( (kn(1-\delta) + n-1)H^2 + (n-2)\Phi_{11}H + k(1-\delta)|\Phi|^2 - \Phi_{11}^2 - \sum_{j=2}^n \Phi_{1j}^2 \right)\,ds \\
        &- \int_0^a \varphi^2 \left( k(n-2)(\log u)_{ss} + k\frac{|\nabla u|^2}{u^2} \right)\,ds.
    \end{align*}
    Using the divergence theorem, we have \(\int \varphi^2 (\log u)_{ss}\,ds = -2\int \varphi \varphi_s u_s u^{-1}\,ds\). Furthermore, we have \( k^2\int \varphi^2 u_s^2 u^{-2}\,ds = \int \varphi^2 (\log u^k)_s^2\,ds = k^2\int \varphi^2 |\nabla u|^2 u^{-2}\,ds \).
    This allows us to combine the terms in the first and last lines as follows
    \begin{align*}
        &(n-1)\int_0^a (\varphi_s)^2\,ds \geq k(n-3)\int_0^a \varphi \varphi_s u_s u^{-1} \,ds + \left(\frac{1}{k} - \frac{n-1}{4}\right)\int_0^a \varphi^2 (\log u^k)_s^2\,ds \\
        &+ \int_0^a \varphi^2 \left( \sum_{j=2}^n \overline{R}_{1j1j} + k(1-\delta)\overline{R}_{n+1,n+1} \right)\,ds \\
        &+ \int_0^a \varphi^2 \\
        &\left( (kn(1-\delta) + n-1)H^2 + (n-2)\Phi_{11}H + k(1-\delta)|\Phi|^2 - \Phi_{11}^2 - \sum_{j=2}^n \Phi_{1j}^2 \right)\,ds.
    \end{align*}
    We now use the basic inequality \(a^2 + b^2 \geq -ab\) with \(a = (n-2)H\) and \(b = \Phi_{11}/2\) which yields
    \[ (n-2)^2H^2 + \frac{\Phi_{11}^2}{4} \geq -(n-2)H\Phi_{11}. \]
    Replacing this in our inequality gives
    \begin{align*}
        &(n-1)\int_0^a (\varphi_s)^2\,ds \geq k(n-3)\int_0^a \varphi \varphi_s u_s u^{-1} \,ds + \left(\frac{1}{k} - \frac{n-1}{4}\right)\int_0^a \varphi^2 (\log u^k)_s^2\,ds \\
        &+ \int_0^a \varphi^2 \left( \sum_{j=2}^n \overline{R}_{1j1j} + k(1-\delta)\overline{R}_{n+1,n+1}
        + (kn(1-\delta) - n^2 + 5n - 5)H^2 \right)\,ds \\
        &+ \int_0^a \left( k(1-\delta)|\Phi|^2 - \frac{5}{4}\Phi_{11}^2 - \sum_{j=2}^n \Phi_{1j}^2 \right)\,ds.
    \end{align*}
    We claim that the last term is greater than zero. Using the crude estimate
    \[ |\Phi|^2 \geq \sum_{j=1}^n \Phi_{jj}^2 + 2\sum_{j=2}^n \Phi_{1j}^2, \]
    and the traceless property \(\sum_{j=1}^n \Phi_{jj} = 0\) gives us
    \[ |\Phi|^2 \geq \frac{n}{n-1}\Phi_{11}^2 + 2\sum_{j=2}^n \Phi_{1j}^2. \]
    We now need to choose 
    \begin{equation}\label{k1}
    k > \frac{5(n-1)}{4n(1-\delta)}
    \end{equation}
    and combine it with the last inequality to estimate the last term as
    \begin{align*}
        &k(1-\delta)|\Phi^2|^2 - \frac{5}{4}\Phi_{11}^2 - \sum_{j=2}^n \Phi_{1j}^2 ) \\
        &\geq \frac{5}{4}\Phi_{11}^2 + \frac{5(n-1)}{2n}\sum_{j=2}^n \Phi_{1j}^2 - \frac{5}{4}\Phi_{11}^2 - \sum_{j=2}^n \Phi_{1j}^2 = \frac{3n-5}{2n}\sum_{j=2}^n \Phi^2_{1j} \geq 0,
    \end{align*}
    as required. Consequently, we now have
    \begin{align*}
        &(n-1)\int_0^a (\varphi_s)^2\,ds \geq k(n-3)\int_0^a \varphi \varphi_s u_s u^{-1} \,ds + \left(\frac{1}{k} - \frac{n-1}{4}\right)\int_0^a \varphi^2 (\log u^k)_s^2\,ds \\
        &+ \int_0^a \varphi^2 \left( \sum_{j=2}^n \overline{R}_{1j1j} + k(1-\delta)\overline{R}_{n+1,n+1}
        + (kn(1-\delta) - n^2 + 5n - 5)H^2 \right)\,ds.
    \end{align*}
    After choosing $k$ such that
    \begin{equation}\label{k2}
    \frac{1}{k} - \frac{n-1}{4}> 0 \quad(\text{that is } k<\frac 4{n-1}),
    \end{equation}
    we can use the inequality \(a^2 + b^2 \geq -ab\) again with \(a = (\frac{1}{k} - \frac{n-1}{4})^{1/2} \varphi (\log u^k)_s\) and \(b = \frac{n-3}{2}(\frac{1}{k} - \frac{n-1}{4})^{-1/2} \varphi_s\) to obtain
    \[ \left(\frac{1}{k} - \frac{n-1}{4}\right)\varphi^2 (\log u^k)_s^2 + \frac{(n-3)^2}{4}\left(\frac{1}{k} - \frac{n-1}{4}\right)^{-1}\varphi_s^{2} \geq -(n-3)\varphi \varphi_s (\log u^k)_s. \]
    Hence,
    \begin{align*}
        &(n-1)\int_0^a (\varphi_s)^2\,ds \geq -\frac{(n-3)^2}{4}\left(\frac{1}{k} - \frac{n-1}{4}\right)^{-1} \int_0^a (\varphi_s)^2 \\
        &+ \int_0^a \varphi^2 \left( \sum_{j=2}^n \overline{R}_{1j1j} + k(1-\delta)\overline{R}_{n+1,n+1} + (kn(1-\delta) - n^2 + 5n - 5)H^2 \right)\,ds.
    \end{align*}
   Rearranging the terms, we now have an inequality of the form
    \[ A\int_0^a (\varphi_s)^2\,ds \geq \int_0^a \varphi^2 \left( \sum_{j=2}^n \overline{R}_{1j1j} + k(1-\delta)\overline{R}_{n+1,n+1} + (kn(1-\delta) - n^2 + 5n - 5)H^2 \right)\,ds\]
    where \(A := \frac{4(k(2-n)+(n-1))}{4-k(n-1)}\) is positive, thanks to the condition \eqref{k2}. We now want to choose \(B > 0\) such that 
    \[ B \leq \sum_{j=2}^n \overline{R}_{1j1j} + k(1-\delta)\overline{R}_{n+1,n+1} + (kn(1-\delta) - n^2 + 5n - 5)H^2 \]
    and therefore this would give
   \begin{equation}\label{eq1}
   A\int_0^a (\varphi_s)^2\,ds \geq B\int_0^a \phi^2\,ds.
   \end{equation}
    
    Recall that $\mathcal{K}$ denotes  the infimum of the sectional
curvatures of $N$. If \(\mathcal{K} \geq 0\) we can set \(B := (kn(1-\delta) - n^2 + 5n - 5)H^2\), which is indeed positive if \(|H| > 0\) and \(k > \frac{5(n-1)}{4n(1-\delta)}\) (that is assumption~\eqref{k1}), when $n=2,3$ or $4$. 
    Otherwise, note that
\begin{equation}\label{scalar} \sum_{j=2}^n \overline{R}_{1j1j} + k(1-\delta)\overline{R}_{n+1,n+1} \geq (kn(1-\delta) + n - 1)\mathcal{K}.
\end{equation}
  Therefore  setting $B := (kn(1-\delta) - n^2 + 5n - 5)H^2 + (kn(1-\delta) + n - 1)\mathcal{K}$ we have $$B\leq \sum_{j=2}^n \overline{R}_{1j1j} + k(1-\delta)\overline{R}_{n+1,n+1} + (kn(1-\delta) - n^2 + 5n - 5)H^2$$ and if 
    \[ H^2 > \frac{kn(1-\delta)+n-1}{kn(1-\delta)-n^2+5n-5}\mathcal{K} \] then $B$ is also greater than zero.
    
    Using our assumptions \eqref{k1} and \eqref{k2}, that is \( \frac{5(n-1)}{4n(1-\delta)} < k < \frac{4}{n-1}\), we can estimate the quotient
    \begin{align*} 
        \frac{kn(1-\delta)+n-1}{kn(1-\delta)-n^2+5n-5} &< \frac{16n + 4(n-1)^2}{5(n-1)^2 + 4(n-1)(-n^2 + 5n - 5)} \\
        & = \frac{4(n+1)^2}{(n-1)(n-5)(5-4n)} < 4.
    \end{align*}
    Therefore \(B\) is positive provided that \(|H| > 2\sqrt{|\mathcal K|}\). 
    
    Integrating equation~\eqref{eq1} by parts now gives
    \[ \int_0^a (\varphi_{ss}A + B\varphi)\varphi\,ds \leq 0. \]
    Choose \(\varphi = \sin(\pi sa^{-1})\), for \(s \in [0,a]\) so that
    \[ \int_0^a \left(B - \frac{A\pi^2}{a^2}\right)\sin(\pi sa^{-1})\,ds \leq 0, \]
   which implies 
    \[ B - \frac{A\pi^2}{a^2} \leq 0, \] that is  \(a < \sqrt{A}\pi/\sqrt{B}\). Setting $c:=\sqrt{A}\pi/\sqrt{B}$, this finishes the proof of the theorem, provided that we can show that we can find $\delta>0$ and \(k\) such that
    \begin{equation}\label{k3}
   \frac{5(n-1)}{4n(1-\delta)} < k < \frac{4}{n-1}. 
\end{equation}
   This inequality is consistent for $n=2$ when $\delta<\frac{27}{32}$, for \(n=3\) when \(\delta < \frac{7}{12}\) and for \(n=4\) when \(\delta < \frac{19}{64}\). 
\end{proof}

%\begin{remark}
    %We wonder if there exists a similar bound for higher dimensions. One could not use a similar proof as the original authors, since there is no constant \(k\) which satisfies the desired inequality for \(n > 4\). For instance, in our case, when \(n = 5\) our almost stability constant would have to be negative. We note that the proof for the two-dimensional case of this theorem works for \(\delta < \frac{27}{32}\).
%\end{remark}

As in the original paper, we can prove a corollary which asserts the non-existence of certain $H$-hypersurfaces.

\begin{corollary}\label{radestcor}
    With $N$ as in Theorem~\ref{radiusest}, let \(M\) be a complete,   \(\delta_n\)-stable, $H$-hypersurface with \(\delta_n < \frac{27}{32}, \frac{7}{12}, \frac{19}{64}\) respectively.  If \(|H| > 2\sqrt{|\min(0,\mathcal K)|}\), then \(\partial M \neq \emptyset\).
\end{corollary}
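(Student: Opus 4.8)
The plan is to obtain Corollary~\ref{radestcor} as an immediate consequence of Theorem~\ref{radiusest}, via a contradiction argument of the type used in~\cite{nelliestimate}. Suppose that $\partial M = \emptyset$. Then every hypothesis of Theorem~\ref{radiusest} is satisfied: $N$ has sectional curvature bounded from below, $M$ is complete and $\delta_n$-stable with $\delta_n$ strictly below the threshold $\frac{27}{32},\frac{7}{12},\frac{19}{64}$ for $n=2,3,4$ respectively, and $|H| > 2\sqrt{|\min(0,\mathcal K)|}$. Consequently there is a constant $c = c(n,\delta_n,H,\mathcal K) > 0$ such that $\dist_M(p,\partial M) \le c$ for every $p \in M$.

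First I would observe that, when $\partial M = \emptyset$, one has $\dist_M(p,\partial M) = +\infty$ for every $p \in M$ by the usual convention that the distance from a point to the empty set is infinite; combined with the uniform bound above, this is absurd. If one prefers to avoid invoking this convention, the same conclusion follows by rereading the proof of Theorem~\ref{radiusest}: when $\partial M = \emptyset$ the geodesic ball $B_M(p,r)$ lies in $\mathrm{int}(M) = M$ for every $r > 0$, so the length $a$ of the $d\widetilde{s}$-minimizing geodesic constructed there satisfies $a \ge r$ for all $r > 0$, while the argument forces $a \le c$; taking $r > c$ yields the contradiction. Hence $\partial M \neq \emptyset$.

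Since the statement reduces entirely to Theorem~\ref{radiusest}, there is essentially no obstacle here: the only thing to be careful about is the bookkeeping that the hypotheses of that theorem are genuinely inherited by $M$, in particular completeness, which is precisely what allows geodesic balls of arbitrarily large radius around a fixed point to remain inside $M$ when there is no boundary. I would also record, as in the original paper, the equivalent reading of the corollary as a non-existence result: there is no complete $\delta_n$-stable $H$-hypersurface in $N$ without boundary once $|H| > 2\sqrt{|\min(0,\mathcal K)|}$, and in particular no such closed (compact, boundaryless) hypersurface.
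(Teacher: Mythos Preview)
Your black-box deduction from Theorem~\ref{radiusest} is valid: with the standard convention $\dist_M(p,\emptyset)=+\infty$, the conclusion $\dist_M(p,\partial M)\le c$ is absurd once $\partial M=\emptyset$, so the corollary follows in one line. This is genuinely shorter than what the paper does. The paper instead argues in two steps: first, the radius estimate forces the intrinsic diameter of $M$ to be at most $c$ when $\partial M=\emptyset$, so by Hopf--Rinow $M$ is compact; second, the positive solution $u$ of $L^\delta u=0$ coming from $\delta$-stability attains a minimum at some $p\in M$, where $\Delta u(p)\ge 0$ contradicts $\Delta u(p)=-(1-\delta)\bigl(|\Phi|^2+nH^2+\overline R_{n+1,n+1}\bigr)u(p)<0$, the hypothesis on $H$ making the potential strictly positive.

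One caveat about your fallback argument (re-reading the proof): the geodesic $\gamma$ in the proof of Theorem~\ref{radiusest} joins $p$ to $\partial B_M(p,r)$, and that metric sphere is nonempty only when some point of $M$ lies at distance $\ge r$ from $p$. If $\partial M=\emptyset$ and $M$ were \emph{compact}, then for $r$ exceeding the eccentricity of $p$ no such $\gamma$ exists, so the proof only delivers a diameter bound, not an immediate contradiction. Your re-reading therefore disposes of the non-compact case cleanly, but the compact case still requires the paper's maximum-principle step (or some other device). The black-box version sidesteps this entirely, at the price of leaning on a literal reading of Theorem~\ref{radiusest} whose own proof does not separately treat the compact boundaryless case.
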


\begin{proof}
    From the previous theorem, the radius of an intrinsic geodesic disk of \(M\) that does not meet \(\partial M\) is at most \(c = c > 0\). Assuming that the boundary of \(M\) is empty, the diameter of \(M\) is at most \(c\), so the Hopf-Rinow theorem implies that \(M\) is compact. By \(\delta\)-stability, there exists a function \(u > 0\) on \(M\) such that \(L^{\delta}u = 0\). By compactness, let \(p \in M\) be the minimum of the function \(u\). Then
    \[ 0 \leq \Delta u(p) = -(1-\delta)(|\Phi|^2(p) + nH^2 + \overline{R}_{n+1,n+1}(p))u(p).\]
    The choice of \(H\) guarantees that the potential is positive, hence the right hand side is negative which yields a contradiction.
\end{proof}

Next we generalize Theorem 1 in~\cite{ros}, that is the theorem below when $\delta=0$.

\begin{theorem}\label{radest3}
    Let \(N\) be a $3$-dimensional Riemannian manifold with scalar curvature uniformly bounded from below by $\mathcal S$ and let \(M\) be a complete, \(\delta\)-stable, $H$-surface with \(\delta < \frac34\). If $3H^2+\mathcal S> 0$ then for any \(p \in M\),
    \[ \dist_M(p,\partial M) \leq 2\pi\sqrt{\frac{1-\delta}{(3-4\delta)(3H^2+\mathcal S)}}. \]
\end{theorem}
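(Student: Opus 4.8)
The plan is to adapt the conformal-change-and-second-variation-of-length argument used in the proof of Theorem~\ref{radiusest} to the case $n=2$, where two features specific to surfaces bring the scalar curvature of $N$ into play. First, for a surface the Ricci curvature in any direction coincides with the Gauss curvature, so the quantity $\widetilde R_{11}$ entering the second variation of length of a geodesic is simply the Gauss curvature $\widetilde K$ of the conformally changed metric. Second, the ambient term $\sum_{j=2}^{n}\overline R_{1j1j}$ collapses to the single sectional curvature $\overline R_{1212}$, and since $\dim N=3$ the combination $\overline R_{1212}+\overline{Ric}(\nu)$ is the scalar curvature of $N$ (in the normalization of the statement), hence bounded below by $\mathcal S$; this is the mechanism by which the hypothesis $3H^2+\mathcal S>0$ enters.

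Concretely, I would first use $\delta$-stability to produce $u>0$ on $M$ with $\Delta u+(1-\delta)(|A|^{2}+\overline{Ric}(\nu))u=0$, conformally change the induced metric to $d\widetilde s^{2}=u^{2k}ds^{2}$, and write $\widetilde K=u^{-2k}(K-k\Delta\log u)$, using the Jacobi equation to replace $\Delta u$ by the potential. Then I would substitute the $n=2$ Gauss equation, which reads $K=\overline R_{1212}-\Phi_{11}^{2}-\Phi_{12}^{2}+H^{2}$. After these substitutions the coefficient of the sign-indefinite term $\overline{Ric}(\nu)$ equals $k(1-\delta)-1$, and the decisive choice is $k=\tfrac{1}{1-\delta}$: it makes that coefficient vanish, so the remaining ambient curvature contribution is precisely $\overline R_{1212}+\overline{Ric}(\nu)\ge\mathcal S$; it turns the coefficient of $H^{2}$ into $3$; and it leaves a shape-operator remainder $|\Phi|^{2}-\Phi_{11}^{2}-\Phi_{12}^{2}=\Phi_{11}^{2}+\Phi_{12}^{2}\ge 0$, using $|\Phi|^{2}=2\Phi_{11}^{2}+2\Phi_{12}^{2}$ for a traceless symmetric $2\times2$ matrix. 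The outcome is $\widetilde K\ge u^{-2k}(3H^{2}+\mathcal S)$, which is positive by hypothesis.

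With this bound in hand the remainder runs exactly as in the proof of Theorem~\ref{radiusest}: transport the second variation inequality $\int_{0}^{\widetilde a}\big((\tfrac{d\phi}{d\widetilde s})^{2}-\widetilde K\phi^{2}\big)\,d\widetilde s\ge 0$ along a $d\widetilde s$-minimizing geodesic from $p$ to $\partial B_M(p,r)$ back to $ds$-arclength, replace the test function by $\varphi u^{k/2}$ to clear the weight $u^{-k}$, integrate the logarithmic terms by parts, and absorb the cross term $k(n-3)\int\varphi\varphi_{s}u_{s}u^{-1}=-k\int\varphi\varphi_{s}u_{s}u^{-1}$ using $x^{2}+y^{2}\ge -xy$. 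This last step needs $\tfrac1k-\tfrac14>0$, that is $k<4$, which for $k=\tfrac1{1-\delta}$ is exactly the hypothesis $\delta<\tfrac34$. One arrives at $A\int_{0}^{a}(\varphi_{s})^{2}\,ds\ge B\int_{0}^{a}\varphi^{2}\,ds$ with $A=\tfrac{4}{4-k}=\tfrac{4(1-\delta)}{3-4\delta}$ and $B=3H^{2}+\mathcal S$, and inserting $\varphi=\sin(\pi s/a)$ forces $a\le\pi\sqrt{A/B}=2\pi\sqrt{\tfrac{1-\delta}{(3-4\delta)(3H^{2}+\mathcal S)}}$, as claimed.

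The genuinely new point compared with the stable case, and the one needing care, is that in order to cancel the sign-indefinite $\overline{Ric}(\nu)$ term one is forced to take $k$ equal to the single value $\tfrac1{1-\delta}$, rather than any $k$ in the admissible range used for Theorem~\ref{radiusest}; one must then check that this value still meets the constraints $\tfrac{5}{8(1-\delta)}<k<4$ from that proof. The lower one holds automatically, while $\tfrac1{1-\delta}<4$ is equivalent to $\delta<\tfrac34$, which is why that is the sharp threshold here; the same choice also keeps the shape-operator remainder nonnegative since $k(1-\delta)=1\ge\tfrac12$. This rigidity in the choice of $k$ is the only place where the surface case genuinely departs from the argument for higher $n$.
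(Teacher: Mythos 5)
Your proposal is correct and follows essentially the same route as the paper: the paper's proof of Theorem~\ref{radest3} likewise runs the argument of Theorem~\ref{radiusest} with $n=2$, makes the forced choice $k=\tfrac{1}{1-\delta}$ (admissible exactly when $\delta<\tfrac34$ by condition~\eqref{k3}) so that $\overline R_{1212}+k(1-\delta)\overline{Ric}(\nu)$ becomes the scalar curvature bound $\mathcal S$, and concludes with $B=3H^2+\mathcal S$ and $A=\tfrac{4(1-\delta)}{3-4\delta}$. Your additional checks (the coefficient $3$ of $H^2$, the nonnegativity of the traceless remainder, and the compatibility of $k=\tfrac1{1-\delta}$ with both constraints on $k$) are exactly the verifications the paper leaves implicit.
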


\begin{proof}
In order to prove this theorem, one can follow the proof of Theorem~\ref{radiusest} with $n=2$. Note that when $n=2$, condition~\eqref{k3} becomes $\frac{5}{8(1-\delta)} < k < 4 $
and thus we can take $k=\frac{1}{1-\delta}$ as long as $\delta<\frac34$. With this choice of $k$, note that instead of equation~\eqref{scalar} we have 
\begin{equation} \sum_{j=2}^n \overline{R}_{1j1j} + k(1-\delta)\overline{R}_{n+1,n+1} = \overline{R}_{1212} + \overline{R}_{3,3}\geq \mathcal S.
\end{equation}
The proof then continues after defining 
$B:=3H^2+\mathcal S$, and noting that when $n=2$, we have $A:=4\frac{1-\delta}{3-4\delta}$.
\end{proof}

Just like before, one can prove a corollary which asserts the non-existence of certain $H$-surfaces.

\begin{corollary}\label{radestcor3}
    With $N$ as in Theorem~\ref{radest3},  let \(M\) be a complete, \(\delta\)-stable, $H$-surface with \(\delta < \frac34\). If $3H^2+\mathcal S> 0$, then \(\partial M \neq \emptyset\).
\end{corollary}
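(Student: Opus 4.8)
The plan is to mirror the argument used for Corollary~\ref{radestcor}, but invoking Theorem~\ref{radest3} in place of Theorem~\ref{radiusest}. First I would assume, for contradiction, that $\partial M=\emptyset$. Under the stated hypotheses $\delta<\tfrac34$ and $3H^2+\mathcal S>0$, Theorem~\ref{radest3} applies and gives a uniform bound
\[
\dist_M(p,\partial M)\le 2\pi\sqrt{\frac{1-\delta}{(3-4\delta)(3H^2+\mathcal S)}}=:c
\]
for every $p\in M$. Since $\partial M=\emptyset$, the distance to the boundary is interpreted as $+\infty$ unless $M$ itself is bounded; more precisely, $\dist_M(p,\partial M)$ being at most $c$ for a manifold without boundary forces the intrinsic diameter of $M$ to be at most $c$ (one takes the convention that $\dist_M(p,\emptyset)$ is the supremum of radii of intrinsic balls around $p$ not meeting $\partial M$, which is all of $M$). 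Hence $M$ has finite diameter, and by completeness the Hopf–Rinow theorem gives that $M$ is compact.

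Next I would run the maximum-principle argument. By $\delta$-stability there is a smooth function $u>0$ on $M$ with $L^\delta u=\Delta u+(1-\delta)(|\Phi|^2+2H^2+\overline{R}_{33})u=0$ (using $n=2$, so $nH^2=2H^2$), exactly as quoted in the proof of Theorem~\ref{radiusest} via Lemma~2.1 in~\cite{wmjpar1}. By compactness $u$ attains a minimum at some $p\in M$, where $\Delta u(p)\ge 0$. Evaluating the equation at $p$ yields
\[
0\le \Delta u(p)=-(1-\delta)\bigl(|\Phi|^2(p)+2H^2+\overline{R}_{33}(p)\bigr)u(p).
\]
Since $u(p)>0$ and $1-\delta>0$, this forces $|\Phi|^2(p)+2H^2+\overline{R}_{33}(p)\le 0$.

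The one genuinely new point — and the step I expect to need a little care — is showing that the hypothesis $3H^2+\mathcal S>0$ makes the potential $|\Phi|^2+2H^2+\overline{R}_{33}$ strictly positive, since here we only control the \emph{scalar} curvature $\mathcal S$ of $N$ rather than the sectional or Ricci curvature. The relevant identity is the one already used inside the proof of Theorem~\ref{radest3}: for a surface in a $3$-manifold, $\overline{R}_{33}=\overline{\operatorname{Ric}}(\nu)$ and $\Scal(N)=2(\overline{R}_{1212}+\overline{R}_{1313}+\overline{R}_{2323})$, while the Gauss-equation bookkeeping in the proof of Theorem~\ref{radiusest} with $n=2$ gives $\overline{R}_{1212}+\overline{R}_{33}\ge \mathcal S$ after the conformal change; tracking the same chain of inequalities (in particular $|\Phi|^2\ge 0$ and the bound $\overline{R}_{1212}+\overline{R}_{1313}+\overline{R}_{2323}\ge\mathcal S$ coming from the scalar-curvature lower bound) yields $|\Phi|^2(p)+2H^2+\overline{R}_{33}(p)\ge 3H^2+\mathcal S>0$. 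This contradicts the inequality obtained from the maximum principle, so the assumption $\partial M=\emptyset$ is impossible, completing the proof. I would present this last curvature estimate carefully but leave the Gauss-equation algebra, which is identical to what appears above, to a one-line reference back to the proof of Theorem~\ref{radest3}.
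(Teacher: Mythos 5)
Your architecture (Hopf--Rinow compactness from the radius estimate, then the maximum principle applied to the positive Jacobi function $u$ at its minimum) is exactly the argument the paper intends when it says ``just like before,'' and everything through $0\le\Delta u(p)=-(1-\delta)\bigl(|\Phi|^2(p)+2H^2+\overline{R}_{33}(p)\bigr)u(p)$ is fine. The gap is at the step you yourself flagged: the inequality $|\Phi|^2+2H^2+\overline{R}_{33}\ge 3H^2+\mathcal S$ is false. The quantity $\overline{R}_{1212}+\overline{R}_{33}\ge\mathcal S$ used in the proof of Theorem~\ref{radest3} bounds the curvature integrand in the second variation of \emph{arclength}, not the stability potential; to turn the scalar-curvature bound into a bound on $\overline{R}_{33}=\overline{Ric}(\nu)$ alone you must subtract $\overline{R}_{1212}$, and by the Gauss equation $\overline{R}_{1212}=K_M-H^2+\tfrac12|\Phi|^2$, where $K_M$ is the Gauss curvature of $M$. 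At the minimum point all one obtains is
\[
0\;\ge\;|\Phi|^2(p)+2H^2+\overline{R}_{33}(p)\;\ge\;\tfrac12|\Phi|^2(p)+3H^2+\mathcal S-K_M(p),
\]
i.e.\ $K_M(p)\ge 3H^2+\mathcal S>0$, which is no contradiction. (Sanity check: for a totally geodesic $S^2$ in the round $S^3$, the left side of your claimed inequality is $2$ and the right side is $3$.)

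Moreover, this is not a fixable slip in the pointwise estimate: take $N=S^2(\kappa)\times\R$ and $M=S^2\times\{0\}$. Then $M$ is complete, totally geodesic ($H=0$, $|A|\equiv 0$), $\overline{Ric}(\nu)\equiv 0$, so the stability integrand reduces to $|\nabla f|^2$ and $M$ is $\delta$-stable for every $\delta$; meanwhile $3H^2+\mathcal S=\kappa>0$, yet $\partial M=\emptyset$. So the potential $|A|^2+\overline{Ric}(\nu)$ can vanish identically under the stated hypotheses, and the maximum-principle step cannot succeed. What the hypothesis $3H^2+\mathcal S>0$ actually controls is the \emph{integral} of the potential: taking $f\equiv 1$ in the stability inequality and using Gauss--Bonnet gives $2\pi\chi(M)\ge(3H^2+\mathcal S)\Area(M)>0$, hence only that $M$ is a sphere or a projective plane --- precisely what the example above realizes. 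To close the argument one must either add a hypothesis making the potential pointwise positive (as the sectional-curvature condition does in Corollary~\ref{radestcor}) or weaken the conclusion to ``$\partial M\neq\emptyset$ or $M$ is closed with $\chi(M)>0$.'' As written, your final curvature estimate is incorrect and the proof does not close.
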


\section{Properness of effectively embedded \texorpdfstring{$H$}{H}-hypersurfaces}\label{proof}

 In  this section we prove Theorem~\ref{main}. We begin by defining ``effectively embedded,'' a term that is sometimes also referred to as ``weakly embedded.''
 
 \begin{definition}\label{effemb}
Let $\phi\colon M \looparrowright N$ be an $H$-hypersurface. We say that $M$ is effectively embedded if at any point \( p \in \phi(M) \), there exists \( \epsilon > 0 \) such that either
\begin{enumerate}
    \item $\phi^{-1}(p)$ consists of a single point $p_1\in M$ and the connected component of \( B_N(p,\epsilon) \cap \phi(M) \) containing $p$ is an embedding of the connected component of \( \phi^{-1}(B_N(p,\epsilon) \cap \phi(M)) \) that contains $p_1$, or
    \item $\phi^{-1}(p)$ consists of two points $p_1$ and $p_2$, $\phi$ restricted to the connected component $\Sigma_i$ of \( \phi^{-1}(B_N(p,\epsilon) \cap \phi(M)) \) that contains $p_i$, $i=1,2$, is an embedding, the connected component of \( B_N(p,\epsilon) \cap \phi(M) \) containing $p$ is equal to $\phi(\Sigma_1\cup\Sigma_2)$, $\phi(\Sigma_1)$ and $\phi(\Sigma_2)$ meet  tangentially at $p$ and their mean curvature vectors point in opposite directions.
\end{enumerate}
\end{definition}

Note that if $M$ is embedded, then it is effectively embedded. This definition is natural as it includes limits of a converging sequence of embedded $H$-hypersurfaces, see for example~\cite{bst}. Abusing the notation, when dealing with effectively embedded hypersurfaces, we will ignore the immersion $\phi$ and when Case 2 of Definition~\ref{effemb} occurs, we might refer to either of the $\phi (\Sigma_i)$ (that is $\Sigma_i$), $i=1,2$, as the connected component of $B_N(p,\epsilon) \cap \phi(M)$ (that is $B_N(p,\epsilon) \cap M $) containing $p$.

The proof of Theorem~\ref{main} is going to use the Stable Limit Leaf Theorem in~\cite{mpr18}. To that end, we need to recall a few definitions. The following definition is sometimes referred to as a weak $H$-lamination.

\begin{definition}\label{hlam} Given $H>0$, a codimension one $H$-lamination $\mathcal{L}$ of $N$ is a collection of immersed (not necessarily injectively) $H$-hypersurfaces $\{L_\alpha\}_{\alpha \in I}$, called the leaves of $\mathcal{L}$, satisfying the following properties:

\begin{enumerate}
    \item $\mathcal{L} = \bigcup_{\alpha \in I} \{L_\alpha\}$ is a closed subset of $N$.
 
    \item Given a leaf $L_\alpha$ of $\mathcal{L}$ and a small disk $\Delta \subset L_\alpha$, there exists an $\epsilon > 0$ such that, if $(q, t)$ denote the normal coordinates for $\exp_q(t\eta_q)$ (here $\exp$ is the exponential map of $N$ and $\eta$ is the unit normal vector field to $L_\alpha$ pointing to the mean convex side of $L_\alpha$), then:
    \begin{enumerate}
        \item The exponential map $\exp: U(\Delta,\epsilon) = \{(q,t) \mid q \in \mathrm{Int}(\Delta), t \in (-\epsilon, \epsilon)\}$ is a submersion.
        \item The inverse image $\exp^{-1}(\mathcal{L}) \cap \{q \in \mathrm{Int}(\Delta), t \in [0, \epsilon)\}$ is a lamination of $U(\Delta,\epsilon)$.
    \end{enumerate}
\end{enumerate}
\end{definition}

\begin{definition}
    Let \(\mathcal{L}\) be an \(H\)-lamination of \(N\) and let \(L\) be a leaf of \(\mathcal{L}\). We say that \(L\) is a \emph{limit leaf} if \(L\) is contained in the closure of \(\mathcal{L} - L\).
\end{definition}

A properly effectively embedded \(H\)-hypersurface is an $H$-lamination with one leaf. We can now state the Stable Limit Leaf Theorem.

\begin{theorem}[Theorem 1 in~\cite{mpr18}]\label{stableleaf}
    The limit leaves of a codimension one H-lamination of a Riemannian manifold are stable.
\end{theorem}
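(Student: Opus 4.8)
The plan is to reduce stability of a limit leaf to the existence of a positive Jacobi field on it, and then to build such a field from the nearby leaves of the lamination.

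\textbf{Reduction to a positive Jacobi field.} Let $L$ be a leaf, with Jacobi operator $\mathcal J_L=\Delta+q$, where $q=|A|^2+\overline{Ric}(\nu)$; since $H>0$ every leaf is two-sided, being co-oriented by its mean curvature vector, so $\nu$ and $\mathcal J_L$ are globally defined (for $H=0$ one would first pass to the orientation double cover). First I would record the easy half of the Fischer-Colbrie--Schoen correspondence: if there is $w>0$ on $L$ with $\mathcal J_L w=0$, then setting $v=\log w$ one has $\Delta v+|\nabla v|^2+q=0$, and integrating by parts gives, for every $f\in C_0^\infty(L)$,
\[
\int_L\big(|\nabla f|^2-q f^2\big)=\int_L|\nabla f-f\,\nabla v|^2\ \geq\ 0,
\]
which is exactly stability in the sense of Definition~\ref{def:almost-stability} with $\delta=0$. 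No completeness of $L$ is needed here, so it suffices to produce a positive Jacobi field on each limit leaf.

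\textbf{Local Jacobi fields from nearby leaves.} Fix $p\in L$. By Definition~\ref{hlam} there is a disk $\Delta\subset L$ around $p$ over which, in normal exponential coordinates, the part of $\mathcal L$ near $\Delta$ is an ordered family of normal graphs with values in $(-\epsilon,\epsilon)$; since $L$ is a limit leaf there are leaves $\neq L$ arbitrarily close to $\Delta$, producing graphs $u_n\to 0$ in $C^{2,\alpha}$ over a slightly smaller disk $\Delta'\ni p$ which, after fixing the accumulation side, satisfy $u_n>0$. Writing $H(\cdot)$ for the (quasilinear) mean curvature operator on normal graphs over $\Delta'$, both $u_n$ and the zero graph (that is, $L$) solve $H(\cdot)=H$, so $0=H(u_n)-H(0)=\mathcal L_n u_n$ with $\mathcal L_n:=\int_0^1 DH(tu_n)\,dt$ a uniformly elliptic linear operator whose coefficients converge to those of $DH(0)=\mathcal J_L$. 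Normalising $w_n:=u_n/u_n(p)$ one has $w_n>0$, $w_n(p)=1$, $\mathcal L_n w_n=0$; the Harnack inequality bounds $w_n$ above and below on compact subdomains, and Schauder estimates yield a subsequential $C^{2,\alpha}_{\mathrm{loc}}$ limit $w_\infty\geq 0$, $w_\infty(p)=1$, with $\mathcal J_L w_\infty=0$, which the strong maximum principle upgrades to $w_\infty>0$. This gives a positive Jacobi field on every sufficiently small disk in $L$.

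\textbf{Globalisation, and the main obstacle.} To globalise I would fix a base point $q_0\in L$ and an exhaustion $K_1\subset K_2\subset\cdots$ of $L$ by compact domains, and for each $j$ choose a leaf close enough to $L$ that its portion over $K_j$ is a single normal graph $u_{n,j}$ over $K_j$; normalising $w_{n,j}:=u_{n,j}/u_{n,j}(q_0)$ and taking a diagonal limit with uniform Harnack and Schauder bounds should produce $w_\infty>0$ on all of $L$ with $\mathcal J_L w_\infty=0$, whereupon the Reduction step finishes the proof. I expect this globalisation to be the main difficulty: one must (i) control the holonomy of $\mathcal L$ along $L$ well enough to know that leaves close to the compact piece $K_j$ really are graphs over it, and to fix an accumulation side coherently over all of $L$; and (ii) prevent the gap between consecutive leaves from collapsing along the exhaustion faster than Harnack permits, which is precisely what keeps the normalised limit finite and strictly positive everywhere rather than degenerating. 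Equivalently, the local pictures of the previous step must be shown to patch along one coherent sequence of leaves tending to $L$ rather than along incompatible ones.
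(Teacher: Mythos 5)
The paper does not prove this statement: it is imported verbatim as Theorem 1 of Meeks--P\'erez--Ros~\cite{mpr18}, so there is no internal proof to compare against; the relevant comparison is with the argument in~\cite{mpr18} itself. Your outline follows that argument faithfully in its first two steps --- the Fischer-Colbrie--Schoen reduction (positive Jacobi function $\Rightarrow$ stability via $v=\log w$ and completing the square, which indeed needs no completeness) and the local construction of a positive Jacobi function as a normalised limit of graph functions of nearby leaves, using the integrated linearisation $\int_0^1 DH(tu_n)\,dt$, Harnack, and Schauder. These steps are correct as written, modulo the usual sign conventions and the observation (which you make) that the one-sided local structure in Definition~\ref{hlam} fixes the accumulation side.

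The genuine gap is exactly the one you flag in your third step, and it cannot be closed by a diagonal argument over an exhaustion alone: if $L$ has nontrivial holonomy, a leaf that is a graph over a disk $\Delta'\subset L$ need not return as a graph over a larger compact piece $K_j$ --- the ``graph'' multiplies into several sheets, or the normalisations $u_{n,j}(q_0)$ at the base point fail to be coherent across different branches, so the normalised limits on overlapping disks need not agree and $w_\infty$ is not well defined on $L$. The resolution in~\cite{mpr18} is structural rather than analytic: one passes to the \emph{holonomy cover} $\widehat L\to L$, on which by construction the holonomy is trivial, so that over every compact subdomain of $\widehat L$ the nearby leaves genuinely are single normal graphs and your diagonal/Harnack argument produces a positive Jacobi function on all of $\widehat L$, hence stability of $\widehat L$. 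One then needs the separate (and nontrivial) fact that a complete two-sided $H$-hypersurface is stable if and only if any covering of it is stable; the direction ``cover stable $\Rightarrow$ base stable'' is the one required here and is not a formal lifting of test functions (compactly supported functions on $L$ lift to non-compactly-supported ones when the cover is infinite). Without the holonomy cover and this descent-of-stability result, your items (i) and (ii) remain open, so the proposal as it stands is an accurate roadmap with the hardest step acknowledged but not carried out.
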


Finally, we are ready to begin the proof of Theorem~\ref{main}. 

\begin{proof}[Proof of Theorem~\ref{main}] Recall that $M$ having locally bounded norm of the second fundamental form means that the intersection of $M$ with any closed extrinsic ball of $N$ has norm of the second fundamental form bounded from above by a constant that only depends on the ball.

Arguing by contradiction, suppose $M$ is not proper. The first step in the proof is to observe that \(\overline{M}\), the closure of $M$, has the structure of an $H$-lamination.  

Let $p\in \overline{M}$. Since $|A|$ is locally bounded, we can apply Theorem~\ref{saturnino} to give a sufficiently small harmonic chart $(U, \phi, B_N(p, r))$ such that for any $\epsilon \in (0, r)$, if $\Sigma$ denotes the set of connected components of $\phi^{-1}(M \cap B_N(p, r))$ intersecting the Euclidean ball $B_{\mathbb{R}^3}(0, \epsilon)$, then there exist $\epsilon > 0$, $\rho \in (\epsilon, r)$, and $C' < \infty$ and a rotation $R \in O(\mathbb{R}^3)$ such that:

\begin{enumerate}
    \item Every connected component of $\Sigma:=R(\Sigma) \cap B(\rho) \times \mathbb{R}$ is the graph of a function $u$ over $B(\rho)$.
    \item For all such functions $u$, we have $\|u\|_{C^{2,\alpha}(B(\rho))} \leq C'$.
\end{enumerate}

Note that $\Sigma=\Sigma_+\cup\Sigma_-$, where $\Sigma_+$ ($\Sigma_-$ respectively) is the collection of components whose mean curvature vector is pointing ``upward'' (``downward'').

If $p\in M$ and $M$ is proper in a neighbourhood $W$ of $p$ 
then $M\cap W=\overline M\cap W$ and, after possibly using a smaller chart, $\Sigma$ consists of a finite number of connected components and $\overline{M}$ has the structure of an $H$-lamination in a neighbourhood of $p$. Else, the number of connected components in $\Sigma$ is infinite and there exists a sequence of connected components $\sigma_n\in \Sigma$ such that $p\notin \sigma_n$ but $p\in\lim_{n\to\infty} \sigma_n$.
After passing to a sub-sequence and without loss of generality, we can assume that $\sigma_n\in \Sigma_+$, for all $n$. Moreover, a standard compactness argument gives that $\sigma_n$ converges $C^{2,\alpha}$ to a graph $\sigma$ containing $p$, with constant mean curvature $H$, bounded $|A|$ and upward pointing mean curvature vector. Note that if $\phi^{-1}(\sigma) \not\subset M$ then, by definition, $\phi^{-1}(\sigma)\subset \overline M$. Furthermore, observe that if $\sigma_-$ is a component in $\Sigma_-$ that is above $\sigma$, then $\sigma_-$ cannot be arbitrarily close to $p$, see for instance~\cite[Lemma 3.1]{meeks2010existence}. Therefore, after possibly using an even smaller chart, no component of $\Sigma_-$ is above $\sigma$. Therefore, by taking $\Delta=\sigma$ in Definition~\ref{hlam}, this discussion shows that $\overline M$ satisfies condition 2 of Definition~\ref{hlam}.

Using more or less the previous arguments, it is fairly standard to show that $\overline M-M$, and therefore $\overline M$ is a collection of $H$-hypersurfaces. Finally, by definition, $\overline M$ is a closed subset of $N$. This finishes to prove the observation that $\mathcal L:=\overline M$ is an $H$-lamination.

 Next, we claim that $\overline M\neq M$. Arguing by contradiction, suppose that \(\overline M=M\) is the only leaf in the lamination $\mathcal L$. Since $M$ is not proper and \(\overline M=M\), $M$ contains a limit point, namely there is a point  $p\in M$ and a sequence of points $p_n\in M$ converging to $p$ extrinsically, but not intrinsically. It's not hard to see that the set of limit points is open and closed. This implies that every point \(x \in M\) is a limit point.
Let $U$ in $M$ be a small geodesic ball centred at \(x\) which is the limit as $n$ goes to infinity of a sequence of pairwise disjoint balls \(U_n\) in \(M\) centered at $p_n$. Performing this construction again for all \(p_n \in U_n\), we obtain pairwise disjoint balls \(U_{n,m} \subset M\) which converge to \(U_n\) in \(N\) as \(m \) goes to infinity. We can then repeat this process, for example next on \(U_{n,m}\) and again on the resulting balls. This iterative process yields an uncountable number of disjoint balls on \(M\) which contradicts the second countable intrinsic property of the topology of a manifold. This shows that $\overline M-M$ is not empty.

Let $L$ be a leaf  $\overline M-M$. Then, by definition, $L$ is a limit leaf of $\mathcal L$. By Theorem~\ref{stableleaf}, $L$ is stable and by Theorem 1 in~\cite{ros}, when $N$ is a 3-dimensional manifold (see Remark~\ref{remark}), and Theorem 1 in \cite{nelliestimate}, when $N$ is a 4 or 5-dimensional manifold,  \(L\) cannot exist (see Corollary~\ref{radestcor} with $\delta=0$). 
This final contradiction proves that $M$ must be proper.
\end{proof}

\begin{remark}\label{remark}
    Analogously to what happened in Section~\ref{generalize}, thanks to Theorem 1 in~\cite{ros} (see Theorem~\ref{radest3} and Corollary~\ref{radestcor3} with $\delta=0$) when the dimension of $N$ is $3$, one can replace the condition  \(|H| > 2\sqrt{|\min(0,\mathcal{K})|}\) with $3H^2+\mathcal S> 0$ (where $\mathcal S$ is a uniform bound from below for the scalar curvature of $N$), and prove a stronger properness result. 
\end{remark}

\section{Properties of  effectively  embedded \texorpdfstring{$H$}{H}-hypersurfaces with bounded \texorpdfstring{$\|A\|$}{|A|}}

To keep the paper self-contained, in this section we state a few properties of hypersurfaces effectively embedded in a manifold with bounded second fundamental form that were used in the proof of Theorem~\ref{main}. These statements are generalizations of existing results.

Given a point $x$ in a hypersurface $M$ in $\mathbb R^n$, a neighbourhood of $x$ is always graphical over the tangent plane of $M$ at $x$. However, the size of such neighbourhood depends on $x$ and, in general, it could be very small. However, when the norm of the second fundamental form of $M$ is bounded, then the size of such neighbourhood is uniformly bounded from below independently of the point, see for example~\cite{ti}.

Analogous results are true for hypersurfaces in a manifold $N$. We begin by referencing a result by Hebey and Herzlich~\cite{ehmh} that establishes that the metric respect to some harmonic coordinates is locally uniformly \(C^{1,\alpha}\)-controlled for any \(\alpha \in (0,1)\), depending only on the bounds on the injectivity radius and sectional curvatures of \(N\). The version stated below is presented by Rosenberg, Souam and Toubiana in the appendix of~\cite{rosenberg2009general}. Note that the version in~\cite{rosenberg2009general} is stated for 3-dimensional manifolds. It is not hard to see that the proof works in higher dimensions~\cite{ehmh}.

\begin{theorem}\label{rosenberg}
    Let \(\alpha \in (0,1)\) and \(\delta > 0\). Let \((N,g)\) be a complete Riemannian manifold with absolute sectional curvature bounds $|K| \leq \Lambda < \infty$. Let \(\Omega\) be an open subset of \(N\) and define the fattening
    \[ \Omega(\delta) := \{x \in N : \dist_{N}(x,\Omega) < \delta \}.  \]
    Suppose that there exists an \(i > 0\) such that for all \(x \in \Omega(\delta)\), we have \(\inj_{N}(x) > i\). Then there exists a constant \(Q_0 > 1\) and a radius \(r_0 > 0\) which depend only on \(i\), \(\delta\), \(\Lambda\) and \(\alpha\), but not on \(N\), such that for any \(x \in \Omega\), there exists a harmonic chart \((U,\phi,B_{N}(x,r_0))\) with \(\phi(0) = x\). Furthermore, we have \(C^{1,\alpha}\)-control over the metric tensor, that is
    \[ Q_0^{-1}\delta_{ij} \leq {g}_{ij} \leq Q_0\delta{ij} \]
    as quadratic forms, and \(\|(\phi^*{g})_{ij}\|_{C^{1,\alpha}(U)} \leq Q_0\)
\end{theorem}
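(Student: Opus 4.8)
The plan is to follow the by-now-standard construction of harmonic coordinates with a curvature-controlled harmonic radius, due to Jost--Karcher and to Hebey--Herzlich~\cite{ehmh}, combined with interior elliptic estimates for the components of the metric. Nothing below is genuinely new; the point is only to observe that the argument is dimension-free, whereas~\cite{rosenberg2009general} recorded it in the $3$-dimensional case.

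First I would fix $x \in \Omega$ and work on a geodesic ball $B_N(x,\rho)$ with $\rho < \min(i,\delta)$; the hypothesis that $\inj_N(\cdot) > i$ on the $\delta$-fattening $\Omega(\delta)$ guarantees that such a ball, together with slightly larger concentric balls used in the construction, stays in a region of controlled injectivity radius, so that geodesic normal coordinates are available there, and by the Rauch comparison theorem (using $|K| \le \Lambda$) in these coordinates the metric is uniformly equivalent to the Euclidean one with constants depending only on $\Lambda$ and $\rho$. Next I would solve the Dirichlet problem $\Delta_g \phi^k = 0$ on $B_N(x,\rho)$ with boundary value the $k$-th normal coordinate function, and argue, exactly as in~\cite{ehmh}, that $\phi = (\phi^1,\dots,\phi^{n+1})$ is a diffeomorphism onto its image on a concentric ball $B_N(x,r_0)$ whose radius $r_0$ depends only on $i$, $\delta$, $\Lambda$ and the Hölder exponent $\alpha$; this $r_0$ is a lower bound for the harmonic radius at $x$. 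The harmonic chart produced this way automatically satisfies $Q_0^{-1}\delta_{ij} \le g_{ij} \le Q_0\delta_{ij}$ as quadratic forms, with $Q_0$ depending only on the same data.

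Second, I would invoke the classical identity that in harmonic coordinates the metric components satisfy a quasilinear elliptic system of the schematic form $g^{ab}\partial_a\partial_b\, g_{ij} = -2\,\Ric_{ij} + Q_{ij}(g,\partial g)$, where $Q_{ij}$ is a universal quadratic polynomial in $g$, $g^{-1}$ and $\partial g$. Under our hypotheses $|\Ric| \le n\Lambda$, so the right-hand side lies in $L^p(U)$ for every $p<\infty$; since the operator $g^{ab}\partial_a\partial_b$ is uniformly elliptic with $C^0$ coefficients by the previous step, interior $W^{2,p}$ estimates give $g_{ij} \in W^{2,p}_{\mathrm{loc}}(U)$ for all $p$, and Morrey embedding yields $\|(\phi^*g)_{ij}\|_{C^{1,\alpha}(U)} \le Q_0$ for every $\alpha \in (0,1)$, after possibly enlarging $Q_0$ and shrinking $U$. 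No further bootstrap is needed since we only claim $C^{1,\alpha}$-control. I would then note explicitly that every ingredient here --- the comparison estimates, the harmonic-coordinate system for $g_{ij}$, and the elliptic estimates --- is insensitive to the dimension $n+1$, which only enters through the constants; hence the statement of~\cite{rosenberg2009general}, proved there for $n+1=3$, holds verbatim in higher dimensions.

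The step I expect to be the main obstacle is the uniform lower bound for $r_0$, that is, that the harmonic radius is bounded below by a constant depending only on $i$, $\delta$, $\Lambda$ and $\alpha$ and \emph{not} on $N$ itself. The clean way to see this is by contradiction and blow-up: if it failed, one rescales a sequence of counterexamples so that their harmonic radii become $1$, extracts a Cheeger--Gromov subsequential limit (the sectional curvature bound rescales to $0$ and the injectivity radius bound keeps the limit noncollapsed, so the limit is flat $\R^{n+1}$), and obtains a flat space with harmonic radius $1$, which is absurd. Since this is precisely what is carried out in all dimensions in Hebey--Herzlich~\cite{ehmh}, in the write-up I would simply cite it rather than reproduce the compactness argument.
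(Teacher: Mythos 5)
The paper does not actually prove this statement: it is quoted (up to dimension) from the appendix of \cite{rosenberg2009general}, with the remark that the underlying Hebey--Herzlich argument \cite{ehmh} is dimension-free, so your outline --- Jost--Karcher harmonic coordinates with controlled harmonic radius, the elliptic system $g^{ab}\partial_a\partial_b g_{ij}=-2\Ric_{ij}+Q(g,\partial g)$, and the blow-up/Cheeger--Gromov compactness proof of the uniform lower bound on $r_0$ --- is precisely the argument the paper is relying on by citation. The one caution is that the direct $W^{2,p}$ bootstrap in your second paragraph is circular as stated (to place $Q(g,\partial g)$ in $L^{p}$ you already need $\partial g\in L^{2p}$, which is not supplied by the $C^{0}$ equivalence of the metric alone), but this is immaterial to the correctness of the proposal since your final blow-up step, which is where \cite{ehmh} actually does the work, delivers the $C^{1,\alpha}$ bound and subsumes that computation.
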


Using the result above, by transferring the problem onto Euclidean space, in the appendix of~\cite{saturnino2021genus}, Saturnino proves the following theorem. Once again this result is stated there for surfaces in a 3-dimensional Riemannian manifold but its proof works in higher dimensions.

\begin{theorem}\label{saturnino} Suppose $(N, g)$ is a  manifold with absolute sectional curvature bounds $|K| \leq \Lambda < \infty$ and let $M \subset N$ be an effectively embedded  $H$-hypersurface. Let $\Omega \subset N$ be an open set lying away from the boundary of $N$, and suppose the norm of the second fundamental form of $M$ in $\Omega$ is bounded above by a constant $C < \infty$. Fix any $\alpha \in (0,1)$ and suppose $\delta$, $i$, $r_0$, and $Q_0$ are as in Theorem~\ref{rosenberg}. Fix an $r \in (0, r_0)$ and let $x \in \Omega$ be such that $d_M(x, \partial M ) > r$. 

Choose a harmonic chart $(U, \phi, B_N(x, r))$ as in Theorem~\ref{rosenberg}. For any $\epsilon \in (0, r)$, let $\Sigma$ be the set of connected components of $\phi^{-1}(M \cap B_N(x, r))$ intersecting the Euclidean ball $B_{\mathbb{R}^3}(0, \epsilon)$. Then there exist $\epsilon > 0$, $\rho \in (\epsilon, r)$, and $C' < \infty$ depending only on $\Lambda$, $C$, $i$, and $\alpha$, and a rotation $R \in O(\mathbb{R}^3)$ such that:

\begin{enumerate}
    \item Every connected component of $R(\Sigma) \cap B(\rho) \times \mathbb{R}$ is the graph of a function $u$ over $B(\rho)$.
    \item For all such functions $u$, we have $\|u\|_{C^{2,\alpha}(B(\rho))} \leq C'$.
\end{enumerate}
\end{theorem}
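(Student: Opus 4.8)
The plan is to push the whole problem down to Euclidean space via the harmonic chart and then invoke the standard Euclidean graphical-decomposition lemma for surfaces with bounded second fundamental form. First I would use Theorem~\ref{rosenberg} to obtain the harmonic chart $(U,\phi,B_N(x,r))$ in which the pulled-back metric $\widetilde g=\phi^*g$ satisfies $Q_0^{-1}\delta_{ij}\le\widetilde g_{ij}\le Q_0\delta_{ij}$ and $\|\widetilde g_{ij}\|_{C^{1,\alpha}}\le Q_0$. The hypersurface $M\cap B_N(x,r)$ pulls back to a hypersurface $\widetilde M=\phi^{-1}(M\cap B_N(x,r))$ in $(U,\widetilde g)$, which is effectively embedded because $\phi$ is a diffeomorphism onto its image. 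The key quantitative input is that the norm of the second fundamental form of $\widetilde M$ with respect to the \emph{Euclidean} metric on $U$ is bounded by a constant $\widetilde C=\widetilde C(C,Q_0)$: the ambient hypersurface has $|A_M|\le C$ with respect to $g$, and converting the second fundamental form and the normal vector from $g$ to the Euclidean metric costs only factors controlled by $Q_0$ and by $\|\widetilde g\|_{C^{1,\alpha}}$ (the Christoffel symbols of $\widetilde g$ are $C^{0,\alpha}$-bounded by $Q_0$, which is what enters the difference between the two second fundamental forms).

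Once $\widetilde M$ is a hypersurface in a Euclidean ball with Euclidean $\|A\|\le\widetilde C$, I would apply the standard fact (see~\cite{ti} for the embedded case, and note that the same proof works for effectively embedded hypersurfaces since the two sheets in Case 2 of Definition~\ref{effemb} are each embedded and meet tangentially) that there is a uniform radius $\rho_0=\rho_0(\widetilde C)>0$ so that, after a rotation $R\in O(\mathbb R^3)$ aligning a chosen tangent plane with the horizontal, every connected component of $\widetilde M$ passing sufficiently close to the center is, inside the solid cylinder $B(\rho)\times\mathbb R$ over a disk of radius $\rho\le\min(\rho_0,r)$, the graph of a function $u$ over $B(\rho)$ with $\|u\|_{C^1}$ controlled. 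To upgrade this to the $C^{2,\alpha}$ bound claimed in item~(2), I would observe that each such graph $u$ solves the $H$-surface equation for the metric $\widetilde g$, which is a quasilinear elliptic PDE with $C^{0,\alpha}$ coefficients (because $\widetilde g$ is $C^{1,\alpha}$); since $u$ already has bounded $C^1$ norm and the graphs are uniformly separated, Schauder estimates give $\|u\|_{C^{2,\alpha}(B(\rho'))}\le C'$ on a slightly smaller disk, which we then rename $B(\rho)$. Finally, to choose $\epsilon$: the graphs through points of $B_{\mathbb R^3}(0,\epsilon)$ with $\epsilon$ small enough (depending on the $C^1$ bound and $\rho_0$) stay inside the cylinder and hence are genuinely graphical over all of $B(\rho)$, which is exactly the selection of $\Sigma$ in the statement.

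The main obstacle I anticipate is not any single estimate but the bookkeeping of how the constants propagate and, more substantively, verifying that the Euclidean graphical-decomposition lemma genuinely applies to effectively (rather than honestly) embedded hypersurfaces. In Case~2 of Definition~\ref{effemb}, near a tangential contact point two embedded sheets coincide in position and tangent plane, so after the rotation both are graphs $u_1,u_2$ over the same disk with $u_1(0)=u_2(0)$ and $\nabla u_1(0)=\nabla u_2(0)$; the bounded-$\|A\|$ argument still produces each as a graph over a uniform disk, and the Schauder step applies to each separately. One must simply be careful that "connected component of $\phi^{-1}(M\cap B_N(x,r))$" is interpreted intrinsically (on the source manifold), so that the two touching sheets count as distinct components, which is consistent with how $\Sigma_+$ and $\Sigma_-$ are used later in the proof of Theorem~\ref{main}. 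Modulo this point, the argument is a routine transfer-to-Euclidean-space plus interior Schauder estimates, and since the cited references~\cite{ehmh,rosenberg2009general,saturnino2021genus} already carry it out in dimension $3$, the only new content is the remark, already made in the excerpt, that every step is dimension-independent.
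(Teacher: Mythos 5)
The paper does not prove this statement itself: it simply cites the appendix of~\cite{saturnino2021genus}, noting that the Euclidean-transfer argument there works in higher dimensions and for effectively embedded hypersurfaces, and your proposal reconstructs exactly that argument (harmonic chart with $C^{1,\alpha}$-controlled metric, conversion of the $\|A\|$ bound to the Euclidean metric, the uniform graphical-decomposition lemma, then interior Schauder for the prescribed-mean-curvature equation), so it matches the paper's intended approach. The one slip is your claim that the graphs are ``uniformly separated,'' which is false in the intended application --- the whole point is that the components of $\Sigma$ may be infinite in number and accumulate on a limit leaf --- but it is harmless, since the interior Schauder estimate is applied to each graph $u$ individually and does not require separation from the other sheets.
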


In fact, in~\cite{saturnino2021genus} the hypersurfaces are assumed to be properly embedded but   the proof works for effectively embedded hypersurfaces. Note that in this more general case the number of connected components in $\Sigma$ could be infinite and connected component must be intended in the sense described in Definition~\ref{effemb}.

\center{Giuseppe Tinaglia at giuseppe.tinaglia@kcl.ac.uk}\\
Department of Mathematics, King's College London,  London, WC2R 2LS, U.K.

\center{Alex Zhou at alex.zhou@kcl.ac.uk}\\
Department of Mathematics, King's College London,  London, WC2R 2LS, U.K.

\bibliographystyle{plain}
\bibliography{ref}

\end{document}